\title{A Glimpse of Arithmetic Dynamics}
\author{Ryan Grady\thanks{
                  Montana State University}
        \and
        Mark Poston\thanks{
                      Montana State University. Mark Poston was supported during this research by a grant from the Undergraduate Scholars Program at Montana State.
                  }
        }
\def\L8{L_\infty}
\def\CC{\mathbb C}
\def\FF{\mathbb F}\def\GG{\mathbb G}\def\HH{\mathbb H}
\def\NN{\mathbb N}
\def\RR{\mathbb R}
\def\ZZ{\mathbb Z}
\def\fJ(E){\mathfrak E}
\def\fJ{\mathfrak J}
\newtheorem{theorem}{Theorem}
\newtheorem*{fermat}{Fermat's Little Theorem}
\newtheorem{lemma}{Lemma}
\newtheorem{proposition}{Proposition}
\newtheorem{definition}{Definition}
\newtheorem{example}{Example}
\begin{document}
\newpage
\maketitle




In this note, we offer a palatable introduction to the field of {\it arithmetic dynamics}. That is, we study the patterns that arise when iterating a polynomial map.  This note is accessible to those who have taken an introductory proof based course and some linear algebra; the appendix utilizes abstract algebra. A more sophisticated overview of the field is given in \cite{dynamics}.

In our study, we consider polynomials defined not over fields like the real or complex numbers, but rather fields which have only finitely many elements. These finite fields arise in abstract algebra and number theory, and they are used extensively in the mathematics underlying cryptography. We pay particular attention to the shape of the patterns arising from iterating a given polynomial, so called orbit types.

We begin by introducing (discrete) dynamical systems. We offer several examples, discuss various types of orbits, and representations of systems by directed graphs. Next, we recall  finite fields and polynomial dynamics over finite fields. We then prove a result about when a given system has no preperiodic orbits. Finally, we describe a handful of questions for further pursuit, and an appendix where we outline the (classical) existence and construction of finite fields.

\section{Dynamical Systems}

Dynamical systems are a fundamental discipline in mathematics.  Moreover, dynamical systems abound in the natural and social worlds.  For instance, dynamics are often applied to game/equilibrium theory, which itself is used in devising pricing schemes in economics.  Chemical and biological networks/pathways are often studied using dynamical systems as well. A standard course in dynamical systems is based on \cite{katok}. In what follows, we will only consider {\it discrete} dynamical systems, i.e., a set $S$ and a function $f: S \to S$.

To get a feel for dynamics, consider a group of six friends: Avery, Blake, Charlie, Dakota, Emerson, and Finley.  Each person has a ball and follows a simple rule for to whom  to pass their ball:

\flushcolumns{
  \begin{multicols}{2}
  \begin{itemize}
\item Avery always passes to Blake;
\item Blake always passes to Avery;
\item Charlie always passes to Dakota;

\columnbreak

\item Dakota always passes to Emerson;
\item Emerson always passes to Charlie; 
\item Finley always holds their ball.
\end{itemize}
\end{multicols}}

\noindent
Pictorially the situation looks as follows.\\

\begin{center}        
        \begin{tikzpicture}

\node[circle] (a) at (0,0) [draw] {$A$};
\node[circle] (b) at (0,2) [draw] {$B$};
\node[circle] (c) at (3,0) [draw] {$C$};
\node[circle] (e) at (6,0) [draw] {$E$};
\node[circle] (d) at (4.5,2) [draw] {$D$};
\node[circle] (f) at (9,1) [draw] {$F$};
\draw[ultra thick,->] (a.70) --  (b.-70);
\draw[ultra thick,->] (b.-110) --(a.110);
\draw[ultra thick, ->] (9.4,0.9) arc (-160:160:0.35cm);
\draw[ultra thick,->] (c) --  (d);
\draw[ultra thick,->] (d) --  (e);
\draw[ultra thick,->] (e) --  (c);
        \end{tikzpicture}
        \end{center}

\noindent
Observe that there are different types of patterns (orbits) in the picture, e.g., it takes three rounds for the Charlie's ball to return to them, while it only takes two rounds for Avery's ball to return to Avery, Finley's ball doesn't move at all.

What we have just described is a (discrete) {\it dynamical system}, more precisely we had a set $S$---in the example, let us represent the friends by their first initial, so $S = \{A, B, C, D, E, F\}$---and a function $ f: S \to S$.  We then considered the iterates of $f$, i.e., the $n$-fold compositions of $f$, $f^n = f \circ f \circ \dotsb \circ f$. In the friends example, $f$ was the rule for passing, e.g., $f$ maps Avery to Blake, so $f(A)= B$.  We then considered the {\it forward orbits} of various balls, e.g., Charlie's ball went to Dakota, then onto Emerson, then returned to Charlie. In terms of the function, we have $f^3 (C) = C$, and we say that Charlie (or their ball) is a {\it periodic point} of {\it period} 3; similarly, Dakota and Emerson are periodic points of period 3.  In addition, Avery and Blake are both periodic points with period 2. Finley has period 1, so they are called a {\it fixed point}. We will see below another orbit type: a {\it pre-periodic point}, i.e., an element $p$ of our set $S$ which becomes periodic after some number of iterations.

\begin{wrapfigure}{r}{0.4\textwidth}
\begin{center}        
        \begin{tikzpicture}

\node[circle] (0) at (0,0) [draw] {$1$};
\node[circle] (1) at (2,0) [draw] {$2$};
\node[circle] (2) at (4,0) [draw] {$3$};
\node[circle] (3) at (0,2) [draw] {$4$};
\node[circle] (4) at (2,2) [draw] {$5$};
\node[circle] (5) at (4,2) [draw] {$6$};
\node[circle] (6) at (0,4) [draw] {$7$};
\node[circle] (7) at (2,4) [draw] {$8$};
\node[circle] (8) at (4,4) [draw] {$9$};
\draw[ultra thick,->] (2) --  (5);
\draw[ultra thick,->] (1) --  (2);
\draw[ultra thick, ->] (0.175,.35) arc (-60:240:0.35cm);
\draw[ultra thick,->] (5) --  (8);
\draw[ultra thick,->] (4) --  (5);
\draw[ultra thick,->] (8) --  (7);
\draw[ultra thick,->] (7) --  (4);
\draw[ultra thick,->] (3.115) --  (6.-115);
\draw[ultra thick,->] (6.-65) --  (3.65);
        \end{tikzpicture}
        \end{center}
        \vspace{1ex}
\end{wrapfigure}

When the underlying set, $S$, of our dynamical system, $(S, f)$, has only a few elements, it is convenient to describe our system as a {\it directed graph}.  As an example, let $S = \{1,2, \dotsc, 9\}$ be the first nine natural numbers. Let us arrange $S$ as a square grid and consider the dynamical system described by the graph at right.

Representing the system as a directed graph makes it convenient to identify the orbit types.  Indeed, we see that the element `1' is a fixed point; there are two periodic orbits/cycles: $(4,7)$ and $(5,6,9,8)$ of period two and four respectively.   Additionally,  both `2' and `3' are preperiodic points as they eventually fall into the $(5,6,9,8)$ periodic orbit.

\section{Finite Fields: Polynomial Dynamics}

The mathematical notion of a {\it field} is as a place to do arithmetic. We recall the definition of field  below; a particular consequence is that the product of two nonzero elements is again nonzero. 

\begin{definition}
A {\it field} is a set $F$ equipped with two associative and commutative binary operations $+$ and $\times$, such that
\begin{itemize}
\item There exist elements $0$ and $1$ in $F$ which are the identity elements for $+$ and $\times$ respectively;
\item Every element $a$ of $F$ has an additive inverse: an element $-a$ such that 
$a+(-a)=0$;
\item Every nonzero element $a$ of $F$ has a multiplicative inverse: an element $a^{-1}$ such that $a \times a^{-1} =1$;
\item The operation $\times$ distributes over $+$: 
$a \times (b +c) = a \times b + a \times c$.
\end{itemize}

\end{definition}

Several fields are familiar to us, e.g., the real numbers $\RR$ or the complex numbers $\CC$. After some thought, one can see that the rational numbers (fractions of integers with nonzero denominator) also has the structure of a field.  In contrast, note that the integers do not form a field: given a nonzero integer other than 1 or -1, say 5, there does not exist another integer with which the resulting product is equal to 1: $1/5$ is a rational number, but not an integer!

The fields with which we are most familiar are infinite, however, there exist fields with only finitely many elements as well.  Consider the integers $\{0,1,2,3,4\}$ equipped with the standard operation of addition, $+$, and multiplication, $\times$, except that we reduce modulo division by 5; let us label these new operations as $\widetilde{+}$ and $\widetilde{\times}$. That is, given two such integers, say 2 and 4, we consider their standard product, but then only remember the remainder after dividing by 5.  More explicitly, $2 \times 4 =8$, 8 divided by $5$ is 1 with remainder 3, so $2 \widetilde{\times} 4 = 3$. (Verify that $2 \widetilde{+} 4 = 1$.) Equipped with the operations $\widetilde{+}$ and $\widetilde{\times}$, the set $\{0,1,2,3,4\}$ defines a field, we will denote this field $\FF_5$. More generally, given any prime number $p$, there is a field $\FF_p$ with underlying set $\{0,1,2,\dotsc, p-2, p-1\}$, as we can do arithmetic modulo any prime $p$!

More generally, given any power of a prime, there exists a corresponding finite field.  That is, let $n$ be a natural number and $p$ a prime number, there exists a unique field with $p^n$ elements, we denote this field $\FF_{p^n}$, e.g., $\FF_8$ for $n=3$ and $p=2$ or $\FF_9$ for $n=2$ and $p=3$. We outline the existence and uniqueness of the field $\FF_{p^n}$ at the end of the article, but let us describe explicitly our examples of $\FF_8$ and $\FF_9$.

\begin{example}
We present the field $\FF_8$ as certain collection of polynomials.
As a set, let 
\[
\FF_8=\{0,1,x,x+1,x^2, x^2+1,  x^2+x, x^2+x+1\}.
\]
The operations $+$ and $\times$ are the standard sum and multiplication of polynomials, however, we must reduce all coefficients modulo 2, and use the replacement rule which sends $x^3$ to $x+1$. For instance, we compute the following
\[
\begin{array}{lll}
(x+1) \times (x^2+x+1) &=& x^3+x^2+x+x^2+x+1\\
&=& x^3+2x^2+2x+1\\
&=& x+1+2x^2+2x+1\\
&=& 2x^2+3x+2\\
&=& 0+x+0\\
&=&x
\end{array}
\]
\end{example}

\begin{example}
The field $\FF_9$ has underlying set $\{0,1,2, x, x+1, x+2, 2x, 2x+1, 2x+2\}$.  The operations $+$ and $\times$ are again those of polynomials.  We now reduce all coefficients modulo 3 and use the replacement rule that $x^2 \equiv 2x+1$.
\end{example}

It is natural to wonder if there is a field of order $n$ for any natural number $n$...the answer is a resounding NO!  Consider $n=6$, it is the case that the structure with respect to the operation $+$ is that of addition modulo 6.  One then sees that $2\times 3 \equiv 0$, so there are nontrivial elements whose product is zero, which would contradict our earlier observation about fields.

Given an (algebraic) equation, one could ask if it has solutions in a given field.  For instance, the equation $x^4-1=0$ has two solutions in $\RR$, but four solutions in $\CC$. Note that $x^4-1=0$  has a unique solution over $\FF_8$, while there are four solutions over $\FF_9$: 1, 2, $x+2$, and $2x+1$.

Counting the number of solutions of equations over various fields is a basic motivation for much of number theory and algebraic geometry.  There are important functions which count the number of solutions of an equation over various fields, these are called {\it Zeta functions}.  Zeta functions occupy a central role in many important theorems and conjectures: Hasse's Theorem, the Weil Conjectures, the Riemann Hypothesis, etc; we offer some explicit questions in Section \ref{sect:q}.

\subsection{Polynomial Dynamics over Finite Fields}

We now come to the main focus of this note: dynamical systems determined by polynomial functions.  Our dynamical system will have as underlying set the elements of a finite field $\FF_{p^n}$.  Given a polynomial, $f(t)$, with integer coefficients, we can interpret $f$ as a map from $\FF_{p^n}$ to itself. More explicitly, given an element $\alpha$ of $\FF_{p^n}$ we can evaluate $f(\alpha)$, apply the replacement rule defining $\FF_{p^n}$, and then reduce all coefficients modulo $p$. It is more concrete to consider several examples, as we now do.

Consider the polynomial $f(t) = t^2+1$ over the field with nine elements, $\FF_9$. The  directed graph below describes the dynamical system generated by iterating $f(t)$.

\begin{center}        
        \begin{tikzpicture}

\node[circle,minimum size=1.7cm] (0) at (0,0) [draw] {$0$};
\node[circle,minimum size=1.7cm] (1) at (3,0) [draw] {$1$};
\node[circle,minimum size=1.7cm] (2) at (6,0) [draw] {$2$};
\node[circle,minimum size=1.7cm] (3) at (0,3) [draw] {$x+2$};
\node[circle,minimum size=1.7cm] (4) at (3,3) [draw] {$2x+1$};
\node[circle,minimum size=1.7cm] (5) at (6,3) [draw] {$2x$};
\node[circle,minimum size=1.7cm] (6) at (0,6) [draw] {$x+1$};
\node[circle,minimum size=1.7cm] (7) at (3,6) [draw] {$x$};
\node[circle,minimum size=1.7cm] (8) at (6,6) [draw] {$2x+2$};
\draw[ultra thick,->] (0) --  (1);
\draw[ultra thick,->] (1) --  (2);
\draw[ultra thick, ->] (6.85,-.15) arc (-160:160:0.5cm);
\draw[ultra thick,->] (3) --  (0);
\draw[ultra thick,->] (4) --  (0);
\draw[ultra thick,->] (5) --  (8);
\draw[ultra thick,->] (6) --  (7);
\draw[ultra thick,->] (7.25) --  (8.155);
\draw[ultra thick,->] (8.-155) --  (7.-25);
        \end{tikzpicture}
        \end{center}

\noindent
This dynamical system has one fixed point, a cycle of length two, and six preperiodic points.

We now work with the same polynomial, $f(t)=t^2+1$, but over the field $\FF_8$.

\begin{center} 
\vspace{1ex}       
        \begin{tikzpicture}

\node[circle,minimum size=2.2cm] (0) at (0,0) [draw] {$0$};
\node[circle,minimum size=2.2cm] (1) at (3,0) [draw] {$x$};
\node[circle,minimum size=2.2cm] (2) at (6,0) [draw] {$x^2+1$};
\node[circle,minimum size=2.2cm] (3) at (9,0) [draw] {$x^2+x$};
\node[circle,minimum size=2.2cm] (4) at (0,4) [draw] {$1$};
\node[circle,minimum size=2.2cm] (5) at (3,4) [draw] {$x^2+x+1$};
\node[circle,minimum size=2.2cm] (6) at (6,4) [draw] {$x^2$};
\node[circle,minimum size=2.2cm] (7) at (9,4) [draw] {$x+1$};
\draw[ultra thick,->] (0.115) --  (4.245);
\draw[ultra thick,->] (4.295) --  (0.65);
\draw[ultra thick,->] (1) --  (2);
\draw[ultra thick,->] (2) --  (3);
\draw[ultra thick,->] (3) --  (7);
\draw[ultra thick,->] (7) --  (6);
\draw[ultra thick,->] (6) --  (5);
\draw[ultra thick,->] (5) --  (1);

        \end{tikzpicture}
        \vspace{1ex}
                \end{center}

\noindent
Note that there are no preperiodic points, no fixed points, and each element is contained in one of two periodic orbits! Hence, it is clear that the dynamics of iterating a polynomial depends significantly on the field. In the second example, the lack of preperiodic points is no coincidence, but rather a consequence of some elementary number theory as we will see in the next section.

\section{Guaranteeing no preperiodic points}

In this section we prove an elementary, but useful result about polynomial dynamics over finite fields. The key technical tool is the following result, typically attributed to Fermat and its extension that the Frobenius map is an automorphism of a finite field.

\begin{fermat}
Let $p$ be a prime.  Then for any integer $a$,
\[
a^p \equiv a \pmod{p}.
\]
\end{fermat}

Fermat's result is incredibly useful, and it's refinement due to Euler, is one of the key ingredients of the RSA (Rivest-Shamir-Adleman) encryption scheme (also known as public-key cryptography). Though first described in 1977, the RSA algorithm, with some technical advances and {\it padding}, remains the most widely used encryption scheme for secure communication and digital commerce in the world.

There is an extension of Fermat's Little Theorem which will prove useful. To begin, given a finite field $\FF_{p^n}$, there is a natural field homomorphism, the $p$th power map (also called the {\it Frobenius endomorphism}):
\[
\phi : \FF_{p^n} \to \FF_{p^n}, \quad a \mapsto a^p.
\]
That the map $\phi$ is actually a field homomorphism follows from the computations
\[
(xy)^p = x^p y^p \quad \quad \text{ and } \quad \quad (x+y)^p = x^p +y^p,
\]
with the latter a consequence of the Binomial Theorem.

Before proving that the Frobenius map is actually an automorphism, we need a standard result about field homomorphisms.

\begin{lemma}
Let $\psi : \GG \to \HH$ be a field homomorphism.  Then $\psi$ is injective.
\end{lemma}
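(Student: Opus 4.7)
The plan is to reduce injectivity to a statement about the kernel and then exploit the existence of multiplicative inverses in a field. By the definition of a field homomorphism, $\psi$ preserves both operations, the additive identity, and the multiplicative identity, so in particular $\psi(0) = 0$ and $\psi(1) = 1$. To show $\psi$ is injective, it is equivalent to show that if $\psi(a) = \psi(b)$ then $a = b$; equivalently, that the only element of $\GG$ mapped to $0$ is $0$ itself. This is because $\psi(a) = \psi(b)$ implies $\psi(a - b) = \psi(a) - \psi(b) = 0$, so injectivity is equivalent to triviality of the kernel.

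Next, I would argue by contradiction. Suppose there were a nonzero $c \in \GG$ with $\psi(c) = 0$. Since $\GG$ is a field and $c \neq 0$, the definition of field guarantees the existence of a multiplicative inverse $c^{-1} \in \GG$ satisfying $c \times c^{-1} = 1$. Applying $\psi$ and using multiplicativity gives
\[
1 = \psi(1) = \psi(c \times c^{-1}) = \psi(c) \times \psi(c^{-1}) = 0 \times \psi(c^{-1}) = 0
\]
in $\HH$. But in any field the identities $0$ and $1$ are required to be distinct, so this is a contradiction. Hence no such nonzero $c$ exists, the kernel is $\{0\}$, and $\psi$ is injective.

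The proof is essentially a one-line computation once the setup is in place; there is no real obstacle. The only subtle point worth pausing on is why one is entitled to assume $\psi(1) = 1$ in the first place, since the definition of field in the excerpt emphasizes the existence of a multiplicative identity but does not spell out what a field homomorphism does with it. In a brief parenthetical I would remind the reader that a map of fields is understood to preserve both identities (equivalently, to be a unital ring homomorphism), so that the final equation $1 = 0$ genuinely yields a contradiction.
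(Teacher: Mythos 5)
Your proof is correct and follows essentially the same route as the paper's: both reduce $\psi(a)=\psi(b)$ to $\psi(a-b)=0$ and then use the multiplicative inverse of a hypothetical nonzero kernel element to force $1=0$ in the target field. Your parenthetical remark that a field homomorphism is assumed to send $1$ to $1$ is a reasonable clarification, but the argument itself is the same.
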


\begin{proof}
Let $x,y \in \GG$ such that $f(x)=f(y)$. Define an element $d=x-y$, so 
\[
f(d) = f(x-y) = f(x)-f(y) =0.
\]
We will show $d=0$. By way of contradiction, suppose not. So $d \neq 0$, then
\[
1=f(1) = f(d d^{-1}) = f(d) f(d^{-1}) = 0 f(d^{-1}) =0,
\]
which is a contradiction in the field $\HH$.
\end{proof}

\begin{proposition}
The Frobenius endomorphism $\phi : \FF_{p^n} \to \FF_{p^n}$ is an automorphism, in particular, $\phi$ is injective.
\end{proposition}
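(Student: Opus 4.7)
The plan is to observe that the proposition essentially follows by combining two facts already in hand: the map $\phi$ is a field homomorphism (as was established just above the lemma, via the identities $(xy)^p = x^py^p$ and $(x+y)^p = x^p + y^p$), and every field homomorphism is injective by the preceding lemma. So injectivity of $\phi$ is immediate, which already gives the second claim of the proposition.

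To upgrade injectivity to bijectivity, I would exploit the fact that $\phi$ is a function from the finite set $\FF_{p^n}$ to itself. For any self-map of a finite set, injectivity forces surjectivity: an injective map $\FF_{p^n} \to \FF_{p^n}$ has image of size $|\FF_{p^n}|$, and a subset of a finite set having the same cardinality as the whole set must equal the whole set. This pigeonhole-style argument therefore makes $\phi$ a bijection.

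Putting the pieces together: $\phi$ is a bijective field homomorphism from $\FF_{p^n}$ to itself, which is exactly the definition of an automorphism. I would make explicit in the write-up that the finiteness of $\FF_{p^n}$ is doing real work here, since over an infinite field of characteristic $p$ (for example an algebraic closure of $\FF_p$, or the function field $\FF_p(t)$) the Frobenius is still an injective homomorphism but need not be surjective. The only genuine ``obstacle'' is conceptual rather than technical, namely remembering to invoke finiteness to get surjectivity from injectivity; there is no difficult calculation to carry out.
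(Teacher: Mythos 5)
Your proposal is correct and follows essentially the same route as the paper: injectivity comes from the lemma that field homomorphisms are injective, and finiteness of $\FF_{p^n}$ upgrades injectivity to surjectivity, hence bijectivity. Your added remark contrasting with infinite fields of characteristic $p$ is a nice touch but does not change the argument.
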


\begin{proof}
Since $\FF_{p^n}$ is finite, any injective map will automatically be surjective, hence its enough to prove that $\phi$ is injective. That $\phi$ is injective is the content of the previous lemma.
\end{proof}

\begin{theorem}
Let $p$ be a prime number, $m$ a natural number, and $c$ an element of $\FF_p$.  For all natural numbers $n$, the dynamical system generated by $f(t) = t^{p^m} +c$ has no preperiodic points over $\FF_{p^n}$.
\end{theorem}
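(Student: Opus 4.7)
My plan is to recast ``no preperiodic points'' as a bijectivity statement, and then deduce bijectivity from the fact that the Frobenius map is an automorphism.

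First I would observe that in any dynamical system $(S,f)$ with $S$ finite, every element is eventually periodic (by the pigeonhole principle applied to the forward orbit). Moreover, an element $x \in S$ is periodic precisely when it lies in the image of every iterate of $f$. Consequently, the dynamical system $(S,f)$ has \emph{no} strictly preperiodic points (i.e., every point is periodic) if and only if $f$ is a bijection from $S$ to itself. So the entire theorem reduces to showing that the map $f : \FF_{p^n} \to \FF_{p^n}$ given by $f(t) = t^{p^m} + c$ is a bijection.

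Next I would factor $f$ as the composition of two maps: the $p^m$-th power map $\alpha \mapsto \alpha^{p^m}$ followed by translation $\beta \mapsto \beta + c$. Translation by a fixed element of a field is manifestly a bijection, with inverse given by translation by $-c$. For the power map, I would note that $\alpha \mapsto \alpha^{p^m}$ is exactly the $m$-fold composition $\phi^m$ of the Frobenius endomorphism $\phi(\alpha) = \alpha^p$. By the proposition just established, $\phi$ is an automorphism of $\FF_{p^n}$, and the composition of automorphisms is an automorphism, so $\phi^m$ is a bijection as well. Hence $f$ is a composition of bijections, and therefore a bijection.

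The only subtlety I anticipate is justifying why one may interpret the integer exponent $p^m$ inside the field $\FF_{p^n}$ as iterated Frobenius. This is immediate from associativity of multiplication, but is worth stating explicitly, since it is the place where the hypothesis that the exponent is a power of the characteristic $p$ (rather than some arbitrary natural number) is actually used; the hypothesis that $c \in \FF_p$ is not needed for the bijectivity argument itself (it only ensures that $f$ takes $\FF_{p^n}$ into $\FF_{p^n}$, which follows since $\FF_p \subset \FF_{p^n}$). Once the reduction to bijectivity is in place, the proof is then just an assembly of the preceding proposition and the trivial bijectivity of translation, with no further computations required.
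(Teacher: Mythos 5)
Your proof is correct and follows essentially the same route as the paper: on the finite set $\FF_{p^n}$ the absence of strictly preperiodic points is equivalent to $f$ being injective (equivalently bijective), and this is deduced from the proposition that the Frobenius map $\phi$ is an automorphism. The only difference is bookkeeping: you write $t \mapsto t^{p^m}$ as the $m$-fold composite $\phi^m$ and compose with the translation by $c$, whereas the paper cancels $c$ additively and proves injectivity of the power map by induction on $m$; your composition-of-bijections phrasing is arguably cleaner, and it sidesteps the slip in the paper's inductive step, where $a^{p^{N+1}}$ is written as $a^p a^{p^N}$ instead of $\left(a^{p^N}\right)^p$.
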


\begin{proof}
Note that our finite system will have no preperiodic points when the map $f$ is injective, hence we prove injectivity of $f$. Suppose $a,b \in \FF_{p^n}$ such that $a^{p^m}+c = b^{p^m}+c$. By cancellation with respect to $+$, we then know that $a^{p^m} = b^{p^m}$.

We now induct on $m$. In the case $m=1$, we deduce that $a=b$ by the proceeding proposition.
Now assume that for $1 \le k \le N$, if $a^{p^N} = b^{p^N}$, then $a=b$ in $\FF_{p^n}$. We consider the case where $k=N+1$.  Suppose, that $a^{p^{N+1}}=b^{p^{N+1}}$.  Note that, $a^{p^{N+1}}=a^p a^{p^N}$ and similarly, $b^{p^{N+1}}=b^p b^{N+1}$. Applying the inductive hypothesis, to $a^{p^N}$ and $b^{p^N}$, we conclude $a^p = b^p$ which is the base case of our induction.  Therefore, for all natural numbers $m$, we have proven that if $a^{p^m} = b^{p^m}$, then $a=b$ and we have established the injectivity of $f$.
\end{proof}

Note that the lack of pre-periodic points does not imply that our system is of the form of the proposition. Indeed, over $\FF_2$ the polynomial $f(z) = z^3$ only has fixed points.

%
%

\section{A Dynamical Proof of Fermat's Little Theorem}

There are many proofs of Fermat's Little Theorem and nearly every text on number theory contains at least one such proof.  Euler gave a proof of the theorem based on the Binomial Theorem; there are also proofs that use basic group theory and a proof due to Ivory and Dirichlet that uses modular arithmetic.  Motivated by the connection to dynamical systems, we recall a proof of the theorem due to Iga \cite{iga} (see also \cite{dragovic}).

Let $n$ be an integer, Iga's idea is to study the dynamics on the interval $[0,1]$ of the following function:
\[
T_n (x) = \left \{ \begin{array}{rr} nx - \lfloor nx \rfloor ,& x \neq 1\\ 1,& x =1 \end{array} \right.
\]
Note that $y-\lfloor y \rfloor$ is just the factional part of the real number $y$.

The reader is encouraged to draw a graph of the function $T_n (x)$, which will make the following clear.

\begin{proposition}
Let $\ell,m,n$ be integers with $n>1$. Then,
\begin{itemize}
\item[(a)] $T_n (x)$ has $n$ fixed points;
\item[(b)] $(T_m \circ T_\ell) (x) = T_{m \ell} (x)$.
\end{itemize}
\end{proposition}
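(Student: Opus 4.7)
The plan is to establish each part by a direct calculation from the definition of the fractional part $\{y\} = y - \lfloor y \rfloor$. For part (a), I would first note that $x=1$ is a fixed point by fiat, so the task reduces to counting solutions of $T_n(x) = x$ in $[0,1)$. On this interval the equation becomes $nx - \lfloor nx \rfloor = x$, or equivalently $(n-1)x = \lfloor nx \rfloor$. Since the right-hand side is an integer, $x$ must have the form $k/(n-1)$ for some integer $k$, and the constraint $0 \le x < 1$ forces $k \in \{0,1,\dots,n-2\}$. A quick verification shows each such $k/(n-1)$ actually is a fixed point. That contributes $n-1$ fixed points in $[0,1)$, and together with $x=1$ we obtain the desired $n$.

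For part (b), I would separate the boundary case $x=1$: here $T_\ell(1)=1$, so $T_m(T_\ell(1)) = T_m(1) = 1 = T_{m\ell}(1)$, which matches. For $x \in [0,1)$, the output $T_\ell(x)$ lies in $[0,1)$ (never equal to $1$), so applying the non-boundary branch of $T_m$ yields
\[
T_m(T_\ell(x)) = m\bigl(\ell x - \lfloor \ell x \rfloor\bigr) - \bigl\lfloor m\bigl(\ell x - \lfloor \ell x \rfloor\bigr)\bigr\rfloor = m\ell x - m\lfloor \ell x \rfloor - \lfloor m\ell x - m\lfloor \ell x \rfloor \rfloor.
\]
The key observation is that $m\lfloor \ell x \rfloor$ is an integer, so it factors out of the outer floor: $\lfloor m\ell x - m\lfloor \ell x \rfloor \rfloor = \lfloor m\ell x \rfloor - m\lfloor \ell x \rfloor$. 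Substituting and canceling the $m\lfloor \ell x \rfloor$ terms leaves $m\ell x - \lfloor m\ell x \rfloor = T_{m\ell}(x)$.

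The whole argument is really just careful bookkeeping; the only point requiring any attention is the integrality of $m\lfloor \ell x \rfloor$, which lets the floor split. The boundary value at $x=1$ is a minor annoyance handled by noting it is a common fixed point of every $T_n$. Indeed, as the proposition suggests, sketching the graph of $T_n$ (a union of $n$ slanted line segments of slope $n$ each mapping onto $[0,1)$) makes both statements essentially visible: the fixed points of (a) are the intersections of these segments with the diagonal, and (b) reflects the fact that composing two such sawtooth maps of slopes $\ell$ and $m$ gives a sawtooth of slope $m\ell$.
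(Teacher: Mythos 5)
Your proof is correct, and in fact the paper offers no proof at all of this proposition: it simply encourages the reader to draw the graph of $T_n$, so your computation supplies exactly the details that picture is meant to suggest. Both steps check out --- the count of $n-1$ solutions $x = k/(n-1)$, $0 \le k \le n-2$, on $[0,1)$ plus the fixed point at $x=1$ (this is where $n>1$ is used), and the floor-splitting identity $\lfloor m\ell x - m\lfloor \ell x\rfloor\rfloor = \lfloor m\ell x\rfloor - m\lfloor \ell x\rfloor$ together with the observation that $T_\ell(x)<1$ for $x<1$, so the non-boundary branch of $T_m$ is the one applied.
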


Let $a$ be an integer and $n$ a prime number.  We will now use the dynamics determined by $T_a$ to show that $p$ divides the difference $a^p-a$, which is equivalent to Fermat's Little Theorem.

Indeed, let $\mathcal{P}$ denote the set of points which have period $p$ with respect to iterating $T_a$.  Equivalently, $\mathcal{P}$ is the set of fixed points of $T_{a^p}$, so $\lvert \mathcal{P} \rvert =a^p$.  Of these $a^p$ fixed points, $a$ are already fixed by $T_a$. As $p$ is prime, we know that $a^p-a$ points then have minimal period $p$, and hence the number of orbits of length $p$ is exactly $(a^p-a)/p$.  But then, as a finite count of orbits,  $(a^p-a)/p$ must be an integer; that is $p$ divides $a^p-a$.

\section{Further Questions}\label{sect:q}

Elliptic curves are geometric objects that can be defined over arbitrary fields.  An elliptic curve, $\mathcal{E}$, defined over the complex numbers $\mathbb{C}$ has a particularly nice description: $\mathcal{E}$ is a quotient of $\mathbb{C}$ by a lattice $L=\mathbb{Z} \oplus \mathbb{Z} \tau$ for some complex number $\tau$ in the upper half plane. This description makes it clear that an elliptic curve over $\CC$ is, as a space, simply a torus.  Elliptic curves are also examples of {\it algebraic varieties}, in that they are the space of solutions to an algebraic equation.  Indeed, our curve $\mathcal{E}$ can be described as the space of solutions to a cubic equation in two variables:
\[
y^2 +a_1 xy + a_2 y = x^3 + a_3 x^2 + a_4 x + a_5.
\]
The equation above is called the (generalized) Weierstrass equation.  Given an elliptic curve, $\mathcal{E}$, we can ask how many points this curve has over the finite field $\FF_q$. Equivalently, this is the number of solutions to the corresponding Weierstrass equation in $\FF_q$; denote this number $N(q)$. It is a classical theorem of Hasse that
\[
\lvert N(q)-q-1) \rvert \le 2 \sqrt{q}.
\]

The collection of periodic points in a dynamical system determined by iterating a polynomial (even rational) map also has the structure of an algebraic variety, see \cite{SilvBook}.  This description was extended in \cite{Gao} to the case of pre-periodic points. Hence, one could ask the following.

\begin{itemize}
\item[(Q1)] Can one bound the number of pre-periodic points in analogy with Hasse's Theorem?
\end{itemize}

For a fixed prime $p$, one can assemble the numbers $N(p^m)$ into a formal power series (with further coefficients), called the (local) {\it zeta function}. (Again, see \cite{SilvBook} for more.) For algebraic curves, this power series is analogous to the Riemann Zeta Function which is the subject of the famous Riemann Hypothesis.  The nature of the zeta function of an algebraic curve led to some of the deepest and encompassing mathematics of the 20th century: The Weil Conjectures.

Pre-periodic zeta functions have only been lightly studied, e.g., \cite{Waddington} and there are many aspects to be explored.

\begin{itemize}
\item[(Q2)] What can one determine about the pre-periodic zeta function?
\end{itemize}

By ``pasting together" the local zeta functions over each prime, one forms the Dedekind Zeta Function. In good situations, one can read off a group of symmetries from the Dedekind Zeta Function: the Galois group. The case of periodic points has been studied in \cite{Morton}.

\begin{itemize}
\item[(Q3)] Can one compute an analogue of the Galois group for pre-periodic varieties?
\end{itemize}

\section*{Appendix: Existence and Uniqueness of Finite Fields}

We now prove that given a prime number $p$ and a natural number $n$, up to isomorphism, there is a unique field with $p^n$ elements. Hence, our notation $\FF_{p^n}$ is unambiguous (up to isomorphism). The proof of the theorem uses some graduate level abstract algebra that may not be familiar to many readers, a standard reference is \cite{algebra}.

\begin{theorem}
Let $p$ be a prime and $n$ a natural number. Then,
\begin{itemize}
\item (Existence) There exists a field with $p^n$ elements.
\item (Uniqueness) All fields with $p^n$ elements are isomorphic.
\end{itemize}
\end{theorem}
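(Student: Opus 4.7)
The plan is to handle existence and uniqueness separately, both via the polynomial $f(x) = x^{p^n} - x \in \FF_p[x]$, where $\FF_p = \ZZ/p\ZZ$ is the prime field.

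For existence, I would let $K$ be a splitting field of $f$ over $\FF_p$ (whose construction is a standard application of Kronecker's theorem on adjoining roots of irreducible factors) and then consider the set $S \subseteq K$ of roots of $f$. The first key step is to show $|S| = p^n$, which amounts to proving that $f$ has no repeated roots; this follows by computing the formal derivative $f'(x) = p^n x^{p^n - 1} - 1 = -1$ in characteristic $p$, so $\gcd(f, f') = 1$ and $f$ is separable. The second key step is to show that $S$ is closed under the field operations of $K$, i.e., that $S$ is itself a subfield. Here one invokes the iterated Frobenius $\Phi(a) = a^{p^n}$: because characteristic $p$ gives $(a+b)^p = a^p + b^p$ (as already noted in the excerpt), one gets $\Phi(a+b) = \Phi(a) + \Phi(b)$ and $\Phi(ab) = \Phi(a)\Phi(b)$, so $S = \{a \in K : \Phi(a) = a\}$ is the fixed set of a ring endomorphism, hence closed under sums, products, additive inverses, and (for nonzero elements) multiplicative inverses. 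Thus $S$ is a field with exactly $p^n$ elements.

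For uniqueness, I would start with an arbitrary field $F$ with $|F| = p^n$. Its characteristic must be $p$ (since the additive order of $1$ divides $p^n$ and must be prime), so $F$ contains an isomorphic copy of $\FF_p$. The multiplicative group $F^\times$ has order $p^n - 1$, so by Lagrange's theorem every $a \in F^\times$ satisfies $a^{p^n - 1} = 1$, hence every $a \in F$ satisfies $a^{p^n} = a$. This shows every element of $F$ is a root of $f$, and since $f$ has at most $p^n$ roots and $|F| = p^n$, the field $F$ is precisely the set of roots of $f$, so $F$ is a splitting field of $f$ over $\FF_p$. The uniqueness of $F$ up to isomorphism now reduces to the general theorem that any two splitting fields of a given polynomial over a given field are isomorphic, which I would cite from \cite{algebra}.

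The main obstacle is the appeal to the general existence and uniqueness of splitting fields, which is genuinely nontrivial and is the reason the statement is relegated to an appendix referencing a graduate algebra text. The separability computation and the Frobenius-fixed-point argument are clean, but packaging the splitting field construction carefully, and invoking uniqueness of splitting fields without reproving it, is the delicate point. A secondary subtlety is remembering that in characteristic $p$ one actually has $p^n \cdot x^{p^n - 1} = 0$, so $f' = -1$ rather than a nonzero polynomial of positive degree; without this the separability argument would fail, and it is precisely this feature of characteristic $p$ that makes the construction work.
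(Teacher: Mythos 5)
Your proposal is correct, and its uniqueness half is essentially the paper's: show that any field of order $p^n$ consists exactly of the roots of $x^{p^n}-x$ over its prime subfield, hence is a splitting field of that polynomial, and then cite uniqueness of splitting fields (you get the relation $a^{p^n}=a$ from Lagrange's theorem alone, whereas the paper also invokes cyclicity of the unit group, which is not actually needed). Your existence half, however, takes a genuinely different route. The paper uses a quotient construction: take an irreducible polynomial of degree $n$ and form the quotient of a polynomial ring by the ideal it generates (as written the paper works in $\ZZ[x]$, though to obtain a field one really must work over $\FF_p[x]$, i.e., also reduce mod $p$), which has the virtue of producing a concrete model in the style of the explicit presentations of $\FF_8$ and $\FF_9$ earlier in the note, but it quietly presupposes that an irreducible polynomial of degree $n$ over $\FF_p$ exists, which is itself a nontrivial fact. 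Your route --- take a splitting field of $x^{p^n}-x$, use the derivative computation $f'=-1$ in characteristic $p$ to get $p^n$ distinct roots, and note that the roots form the fixed set of the iterated Frobenius, hence a subfield --- sidesteps the irreducibility question entirely and organizes both existence and uniqueness around the single polynomial $x^{p^n}-x$; the price is the appeal to the general existence of splitting fields, which the paper's uniqueness sketch relies on anyway. Note also that your set $S$ of roots is in fact all of $K$ by minimality of the splitting field, which is precisely the paper's remark that the splitting field ``has $p^n$ elements by minimality''; your separability computation supplies the detail that remark leaves implicit.
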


\begin{proof}[Proof of Existence]
Consider the ring of polynomials with integer coefficients $\ZZ[x]$.  Let $p(x)$ be an irreducible polynomial of degree $n$.  As $\ZZ$ is a principal ideal domain, the ideal generated by $p(x)$ in $\ZZ[x]$ is maximal, so $\ZZ[x]/(p(x))$ is a field.  It is straightforward to verify that $\ZZ[x]/(p(x))$ has $p^n$ elements.
\end{proof}

\begin{proof}[Sketch of Proof of Uniqueness]
The main idea is to consider the splitting field $\mathbb{K}$ of the polynomial $x^{p^n}-x$ over $\FF_p$.  This splitting field has ${p^n}$ elements by minimality.  Now the group of units $\FF_{p^n}^\times$ is a cyclic group of order $p^n-1$ and each element is a root of $x^{p^n-1} -1$. Hence, $\FF_{p^n}$ is a splitting field of $x^{p^n}-x$ over $\FF_p$.  By uniqueness of splitting fields, we have that $\FF_{p^n} \cong K$.
\end{proof}

\begin{theorem}
Let $n \in \NN$ such that there exist distinct primes $p$ and $q$ both dividing $n$. There exists no field of order $n$.
\end{theorem}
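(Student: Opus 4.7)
My plan is to argue by contradiction: suppose $F$ is a finite field with $|F| = n$ and derive that $n$ must be a prime power, contradicting the assumption that two distinct primes $p$ and $q$ both divide $n$. The whole argument rests on the observation that a finite field carries a well-defined \emph{characteristic} that is prime, and that this characteristic forces the order to be a power of itself.

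First I would introduce the characteristic of $F$ as the smallest positive integer $k$ such that $k \cdot 1_F := 1_F + 1_F + \cdots + 1_F$ ($k$ summands) equals $0_F$. Since $F$ is finite, the list $1_F, 2\cdot 1_F, 3\cdot 1_F, \ldots$ must eventually repeat and so $k$ exists. Next I would show $k$ is prime: if $k = ab$ with $1 < a, b < k$, then distributivity gives $(a \cdot 1_F)\times(b \cdot 1_F) = (ab)\cdot 1_F = 0_F$; but in a field the existence of multiplicative inverses forbids nonzero zero divisors, contradicting the minimality of $k$. Call this prime $p$.

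The central observation is that for every $x \in F$,
\[
p \cdot x \;=\; (p \cdot 1_F) \times x \;=\; 0_F \times x \;=\; 0_F,
\]
so in the additive group $(F, +)$ every nonzero element has additive order exactly $p$. The map $\ZZ \to F$ sending $k \mapsto k \cdot 1_F$ therefore factors through $\FF_p$ and embeds $\FF_p$ as a subfield of $F$. Scalar multiplication by this copy of $\FF_p$ endows $F$ with the structure of an $\FF_p$-vector space, necessarily finite dimensional since $F$ itself is finite, whence $|F| = p^d$ for $d = \dim_{\FF_p} F$. This contradicts that a second prime $q \neq p$ also divides $n$, completing the proof.

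The hard part—really the only step that uses more than direct manipulation of the field axioms—is the concluding conversion of the pointwise identity $p \cdot x = 0_F$ into the global statement $|F| = p^d$. If I wanted to avoid invoking linear algebra over $\FF_p$ outright, I could instead appeal to Cauchy's theorem for finite abelian groups: any prime $q$ dividing $|F|$ would produce an element of additive order $q$ in $(F,+)$, but the displayed identity forces every nonzero element to have additive order $p$, so $q = p$. Either route closes the argument, and the rest is bookkeeping with the field axioms.
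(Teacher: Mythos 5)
Your proof is correct, and it takes a genuinely different route from the paper's. You run the standard characteristic argument: the characteristic of a finite field is a prime $p$, the identity $p\cdot x = (p\cdot 1_F)\times x = 0$ holds for every $x$, and then either the $\FF_p$-vector-space structure gives $\lvert F\rvert = p^d$ or Cauchy's theorem for the additive group rules out an element of order $q$; both closings are sound. The paper instead reduces the statement to a proposition that there is no field of order exactly $pq$, and proves that by invoking the Sylow theorems to identify the additive group with the cyclic group $\ZZ/pq$ and then exhibiting $\rho = p\times 1$ and $\delta = q\times 1$ as zero divisors via $\rho\times\delta = pq\times 1 = 0$. Your approach buys two things: it handles general $n$ directly, whereas the paper leaves the reduction from general $n$ to the order-$pq$ case implicit (and that reduction is most naturally supplied by exactly your characteristic observation); and it sidesteps the delicate point of whether $\rho$ and $\delta$ are actually nonzero, which the paper's contradiction needs but which sits in tension with your own remark that the characteristic (the additive order of $1$) must be prime, hence equals $p$ or $q$ rather than $pq$. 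What the paper's route buys is brevity and a very concrete contradiction -- explicit zero divisors -- at the cost of citing group-theoretic structure results; your argument is the more robust and more standard one.
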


The theorem follows from a simpler proposition.

\begin{proposition}
Let $p$ and $q$ be distinct primes, then there exists no field of order $pq$.
\end{proposition}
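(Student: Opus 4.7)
The plan is to argue by contradiction and reduce to the fact that every finite field has prime-power order, via the characteristic.

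First, suppose a field $F$ of order $pq$ exists. I would examine the characteristic of $F$, defined as the smallest positive integer $r$ with $r \cdot 1_F = 0$. The characteristic must be prime: if $r = ab$ with $1 < a, b < r$, then $(a \cdot 1_F)(b \cdot 1_F) = r \cdot 1_F = 0$, and since a field has no zero divisors one of $a \cdot 1_F$ or $b \cdot 1_F$ vanishes, contradicting minimality of $r$. Moreover $r$ is nonzero because the additive group $(F,+)$ is finite, and by Lagrange's theorem the additive order of $1_F$ divides $|F| = pq$. Hence $r \in \{p,q\}$; without loss of generality, say $r = p$.

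Next, the prime subfield generated by $1_F$ is isomorphic to $\FF_p$, so $F$ contains a copy of $\FF_p$ and therefore has the structure of a vector space over $\FF_p$. Since $F$ is finite this vector space is finite-dimensional, say of dimension $k$, and then
\[
|F| = p^k.
\]
Comparing with $|F| = pq$ yields $pq = p^k$, whence $q = p^{k-1}$. Because $p$ and $q$ are distinct primes, $p^{k-1}$ is prime only if $k - 1 = 1$ and $p = q$, contradicting $p \neq q$. This rules out the existence of $F$ and proves the proposition.

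The only mildly subtle point I foresee is justifying that the characteristic is prime and nonzero without invoking heavier machinery; everything else is a direct consequence of viewing $F$ as an $\FF_p$-vector space. Once that reduction is in place the argument is essentially arithmetic: no prime power $p^k$ equals a product $pq$ of two distinct primes.
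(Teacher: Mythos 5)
Your argument is correct, but it takes a genuinely different route from the paper's. The paper works with the additive group directly: it cites the Sylow theorems (really the classification of abelian groups of order $pq$) to identify $(\FF,+)$ with $\ZZ/pq$, and then exhibits $\rho = p\cdot 1$ and $\delta = q\cdot 1$ as nonzero elements whose product is $pq\cdot 1 = 0$, contradicting the fact that a field has no zero divisors. You instead run the standard characteristic argument: the characteristic $r$ is a prime dividing $\lvert F \rvert = pq$, the prime subfield is a copy of $\FF_r$, and viewing $F$ as a finite-dimensional $\FF_r$-vector space forces $\lvert F\rvert = r^k$, which no product of two distinct primes can equal. Your route proves the stronger fact that every finite field has prime-power order, so it delivers the appendix's more general theorem (no field of order $n$ when two distinct primes divide $n$) in one stroke, and it avoids a point the paper leaves implicit, namely that $1$ must have additive order exactly $pq$ so that $\rho$ and $\delta$ are genuinely nonzero. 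The paper's route, in exchange, produces explicit zero divisors in the spirit of the earlier $n=6$ discussion, at the cost of invoking group-theoretic classification. One small gloss you might add: excluding $r=1$ (i.e. $1_F = 0$) takes a sentence, since then every element would be $0$ and $F$ could not have $pq$ elements; with that noted, your primality-plus-Lagrange step cleanly pins $r$ to $\{p,q\}$.
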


\begin{proof}
Suppose $\FF$ is a field of order $pq$ and let 1 denote the unit with respect to multiplication $\times$. One can show that the underlying abelian group $(\FF, +,0)$ is isomorphic to the cyclic group $\ZZ/pq$ (an application of the Sylow Theorems).  Let $\rho = p \times 1$ and $\delta = q \times 1$. Then,
\[
\rho \times \delta = pq \times 1 = 0
\]
in $\FF$. That is, $\rho$ (and $\delta$) is a nontrivial zero divisor, contradicting $\FF$ being a field.
\end{proof}

\section*{About the authors:}

\subsection*{Ryan Grady}
Ryan Grady is an Assistant Professor of Mathematics and Director of the Directed Reading Program in Mathematical Sciences at Montana State.

Department of Mathematical Sciences,
Montana State University, Bozeman 59717.
ryan.grady1@montana.edu

\subsection*{Mark Poston}
Mark Poston is a recent Bachelor of Science in Mathematics (with honors) degree recipient and a current Master's of Science candidate at Montana State. He is a member of Pi Mu Epsilon and was active in math club and math modeling competitions as an undergraduate student.

Department of Mathematical Sciences,
  Montana State University, Bozeman 59717.

\end{document}